\title{Robust method for finding sparse solutions to linear inverse problems using an L2 regularization}
\author[1]{Gonzalo H. Otazu}
\affil[1]{Cold Spring Harbor Laboratory, Cold Spring Harbor, NY, 11724

ghotazu@gmail.com}
\newtheorem{theorem}{Theorem}
\begin{document}
\maketitle

\begin{abstract}
We analyzed the performance of a biologically inspired algorithm called the Corrected Projections Algorithm (CPA) when a sparseness constraint is required to unambiguously reconstruct an observed signal using atoms from an overcomplete dictionary. By changing the geometry of the estimation problem, CPA gives an analytical expression for a binary variable that indicates the presence or absence of a dictionary atom using an L2 regularizer. The regularized solution can be implemented using an efficient real-time Kalman-filter type of algorithm.  The smoother L2 regularization of CPA makes it very robust to noise, and CPA outperforms other methods in identifying known atoms in the presence of strong novel atoms in the signal. 
\end{abstract}

\section{Introduction}

Representation of time-varying signals in terms of a sparse set of atoms from an overcomplete dictionary is important in machine learning, and it has been proposed as one of the fundamental computations of  the nervous system. Reconstruction algorithms create an estimate of the observed signal using the appropriate weighted atoms of an overcomplete dictionary. The overcompleteness of the dictionary creates a situation where there are multiple sets of dictionary atoms that reconstruct the signal equally well. Under these conditions, a solution that minimizes the number of dictionary atoms that have a non-zero contribution to the signal is preferred. Directly finding the solution that minimizes the number of atoms used from the dictionary or L0 norm is an NP-complete problem, making it intractable even for moderately sized dictionaries. However, it has been shown that under very general conditions the solution that minimizes the sum of the absolute values of the contributions of the dictionary atoms or L1 norm can be used to identify the sparsest solution\cite{Donoho2006ForSolution}.  Although there is no analytical expression for the minimal L1 norm solution, there are effective algorithms for finding L1 minimal solutions, which has triggered an explosion of interest in the use of the L1 norm for sparse reconstructions. On the other hand, minimization of the sum of the squares of the contributions or L2 norm yields an analytical solution. However, the minimal L2 norm solutions are, in general, not sparse. 
\footnote{This work has been submitted to the IEEE for possible publication. Copyright may be transferred without notice,
after which this version may no longer be accessible.}

In this contribution we show that an algorithm described as a model for sound identification in the mammalian auditory system\cite{Otazu2011AScenes.}, the Corrected Projections Algorithm or CPA, can be used for finding sparse representations of temporally uncorrelated atoms chosen from overcomplete dictionaries. CPA yields an optimization problem that directly infers the presence or absence of all the dictionary atoms by way of a least-squares minimization using an L2 constraint on the estimated parameters. The L2 regularization adds robustness in the presence of strong noise, outperforming standard sparse representation methods in novel situations.

In section \ref{CPA} we present the problem and the algorithm and show how the algorithm identifies the atoms present in the overdetermined case. In section \ref{CPA_with_L2} we show that the algorithm can extended to an overcomplete situation where the number of temporal observations is insufficient for a non-ambiguous reconstruction of the signal and show that the change in the geometry of the problem allows for a reconstruction using the L2 norm. In section \ref{iCPA and performance} we show how the L2 regularized solution can be implanted using a Kalman filter. In section \ref{CPA_performance} we compare the performance of CPA with other sparse representation algorithms. We show that CPA produces weak dense representations for novel atoms, whereas other sparse representation algorithms create sparse amplitude-dependent representations of novel atoms. We also show that CPA outperforms other methods in identifying known atoms in the presence of strong novel atoms.

\section{Corrected Projections Algorithm} \label{CPA}
\subsection{Problem statement}
At each time t, an observation vector $\overrightarrow{y(t)}$ of dimension $N$ by $1$ is produced by the combination of a few atoms of dimension $N$ by $1$ from a dictionary of $M$ possible atoms. That is:

\begin{equation}\label{signal_descript}
\overrightarrow{y(t)}  = \sum_{i=1}^{M} A_i(t)\overrightarrow{B_i}\ \text{,}
\end{equation}
where $\overrightarrow{B_i}$ is the i-th dictionary atom, which is an $N$ by $1$ vector, and $A_i (t)$ is the contribution of the i-th dictionary atom at time $t$. We will represent any $N$ by $1$ vector $X$ using the symbol $\overrightarrow{X}$.
The general assumption is that the observation is generated by a few dictionary atoms that belong to an active set $A$ of $k$ atoms, whereas the other atom's contributions are equal to zero:
\begin{equation}\label{sparse_contrib}
A_i (t)=0, i∉ A,\forall{t}.
\end{equation}
In order to determine the dictionary atoms that contributed to the observed signal, an estimate $\widehat{y(t)}$ of each temporal observation   is calculated by combining the dictionary atoms by way of an estimate $\widehat{A_i (t)}$  of the contributions of each dictionary atom, that is:
\begin{equation}\label{estimate_descript}
\widehat{y(t)}  = \sum_{i=1}^{M} \widehat{A_i(t)}\overrightarrow{B_i}\ \text{.}
\end{equation}
The estimates of the contributions are found by minimizing a cost function defined as the Euclidean distance between the estimate and the observation, that is:
\begin{equation}\label{simple_cost}
Cost(t) =\left|\overrightarrow{y(t)}-\widehat{y(t)}\right|^2 \text{.}
\end{equation}
We will represent the  L2 norm of a column vector $X$  as $\left|X \right|^2=X^TX$ .	In general, for large dictionaries, there might be multiple solutions that minimize the cost function. Therefore, current algorithms complement the cost function with a penalty associated with the value of the estimated contribution of the dictionary atoms, that is:
\begin{equation}\label{cost_with_regular}
Cost(t) =\left|\overrightarrow{y(t)}-\widehat{y(t)}\right|^2+\lambda \sum_{i=1}^{M} |A_i(t)|\ \text{,}
\end{equation}
where $\lambda$ is a parameter that determines the balance between the estimation error and the degree of sparseness of the solution.
Single measurement algorithms such as Matching Pursuit\cite{Mallat1993MatchingDictionaries}  calculate a time-varying estimate $\widehat{A_i(t)}$  of the contribution $A_i(t)$ for each individual observation $\overrightarrow{y(t)}$ while trying to reduce the number of dictionary atoms that make a non-zero contribution.  Conversely, multiple measurement vector (MMV) versions of these algorithms \cite{Cotter2005SparseVectors} take into account all the temporal observations available of the signal and use the sparseness constraint to minimize the number of dictionary atoms that make any contribution at any point in time, that is:
\begin{equation}\label{cost_with_regular_MMV}
Cost =\sum_{t=1}^{T} \left|\overrightarrow{y(t)}-\widehat{y(t)}\right|^2+\lambda \sum_{t=1}^{T} \sum_{i=1}^{M} |A_i(t)|^p\ \text{,}
\end{equation}
where $p≤1$. The use of multiple observations allows for better estimates compared to using individual observations. 
\subsection{Corrected Projections Algorithm}
CPA is similar to multiple measurement vector algorithms in that it uses the information from multiple observations to identify a time-invariant binary variable for each dictionary atom that indicates whether or not the corresponding atom was present for any of the temporal observations of the signal. These binary variables, called the presence parameters, can be written as a column vector:
\begin{equation}\label{def_theta}
 \Theta = \begin{pmatrix}
  \theta_1  \\
  \theta_2  \\
  \vdots \\
 \theta_M 
 \end{pmatrix} \text{.}
\end{equation}
In order to create the estimate of the observed signal, CPA combines a time-varying rough estimate  $\widehat{A_i (t)}$  of the contribution of an individual dictionary atom corrected by the corresponding time-invariant presence parameter $\theta_i$, that is:
\begin{equation}\label{estimate_descript_cpa}
\widehat{y(t)}  = \sum_{i=1}^{M} \theta_i\widehat{A_i(t)}\overrightarrow{B_i}\ \text{.}
\end{equation}
The rough estimate at time $t$ of the contribution of a given dictionary atom is given by the scalar product between the current observation and the dictionary atom, that is:
\begin{equation}\label{cpa_rough_estimate}
\widehat{A_i(t)}={\overrightarrow{B_i} \cdot \overrightarrow{y(t)}}\text{.}
\end{equation}
 We can define an $N$ by $M$ projection matrix $\phi(t)$ at time $t$, where each column corresponds to a dictionary atom weighted by the rough estimate of its contribution to the signal $\overrightarrow{y(t)}$:

\begin{equation}\label{projection_matrix}
\phi(t) = 
 \begin{pmatrix}
  B_{1,1}(\overrightarrow{B_1} \cdot \overrightarrow{y(t)}) &\cdots & B_{1,M}(\overrightarrow{B_M} \cdot \overrightarrow{y(t)}) \\
  B_{2,1}(\overrightarrow{B_1} \cdot \overrightarrow{y(t)}) &\cdots & B_{2,M}(\overrightarrow{B_M} \cdot \overrightarrow{y(t)}) \\
  \vdots    & \ddots & \vdots  \\
  B_{N,1}(\overrightarrow{B_1} \cdot \overrightarrow{y(t)})  & \cdots & B_{N,M}(\overrightarrow{B_M} \cdot \overrightarrow{y(t)}) 
 \end{pmatrix}
\end{equation}
where $B_{j,i}$ is the j-th component of the i-th dictionary atom.
We can arrange all the $T$ available observations $\overrightarrow{y(t)}$ into a long $T xN$ by $1$ vector $Y$:
\begin{equation}\label{observ_vector}
Y=\begin{pmatrix}
\overrightarrow{y(1)} \\
\overrightarrow{y(2)} \\
\vdots  \\
\overrightarrow{y(T)} \\
\end{pmatrix}\text{.}
\end{equation}
We can also arrange all the $T$ projection matrices $\phi(t)$ (one for each temporal observation) into one large $TxN$ by $M$ matrix $\Phi$:
\begin{equation}\label{phi_defini}
\Phi=\begin{pmatrix}
\phi(1) \\
\phi(2) \\
\vdots  \\
\phi(T) \\
\end{pmatrix}\text{.}
\end{equation}
Using this notation, we can write the CPA estimate $\widehat{Y}$  of the observation vector $Y$ as:
\begin{equation}\label{Y_definition}
\widehat{Y}=\Phi\Theta \text{.}
\end{equation}
CPA estimates the presence parameter vector $\Theta$ as the one that minimizes the square error between the observation vector $Y$ and its estimate $\widehat{Y}$. The presence parameter vector is given by:
\begin{equation}\label{standard_CPA}
\Theta=(\Phi{\Phi}^T)^{-1}\Phi^TY \text{.}
\end{equation}
Here we provide a simple proof that the solution of equation \ref{standard_CPA} identifies the atoms present in the signal.
\begin{theorem}
The solution for the presence parameters given by equation \ref{standard_CPA} is $\theta_i=1$ for the atoms that are present and $\theta_i=0$  for the atoms that are not present as long as: 
\begin{enumerate}
\item the dictionary atoms that are simultaneously present in the signal are orthogonal to each other, and
\item the $M$ by $M$ matrix $\Phi{\Phi}^T$ is invertible.
\end{enumerate}
\end{theorem}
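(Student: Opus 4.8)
The plan is to exhibit the claimed presence vector explicitly and then verify that it is the \emph{unique} minimizer of the least-squares cost underlying \ref{standard_CPA}, rather than manipulating the normal equations blindly. Write $\mathcal{A}$ for the active set of atoms that contribute to the signal and let $\Theta^{\star}$ be the $M\times 1$ vector with $\theta^{\star}_{i}=1$ for $i\in\mathcal{A}$ and $\theta^{\star}_{i}=0$ otherwise. The entire theorem then reduces to a single identity: $\Phi\Theta^{\star}=Y$, i.e. $\phi(t)\Theta^{\star}=\overrightarrow{y(t)}$ for every $t$.

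First I would unpack $\phi(t)\Theta^{\star}$. By the definition of $\phi(t)$ in \ref{projection_matrix}, its $i$-th column is $(\overrightarrow{B_i}\cdot\overrightarrow{y(t)})\,\overrightarrow{B_i}$, so that $\phi(t)\Theta^{\star}=\sum_{i\in\mathcal{A}}(\overrightarrow{B_i}\cdot\overrightarrow{y(t)})\,\overrightarrow{B_i}$. Next I substitute the generative model \ref{signal_descript} together with the sparsity assumption \ref{sparse_contrib}, which gives $\overrightarrow{y(t)}=\sum_{j\in\mathcal{A}}A_j(t)\overrightarrow{B_j}$ and hence $\overrightarrow{B_i}\cdot\overrightarrow{y(t)}=\sum_{j\in\mathcal{A}}A_j(t)\,(\overrightarrow{B_i}\cdot\overrightarrow{B_j})$. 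Now hypothesis (1) enters: for $i\in\mathcal{A}$ the atoms present in the signal are mutually orthogonal, so every $j\neq i$ term vanishes, and using the (standing, unit-norm) normalization $|\overrightarrow{B_i}|^{2}=1$ the rough estimate \ref{cpa_rough_estimate} collapses to $\overrightarrow{B_i}\cdot\overrightarrow{y(t)}=A_i(t)$. Plugging this back in yields $\phi(t)\Theta^{\star}=\sum_{i\in\mathcal{A}}A_i(t)\overrightarrow{B_i}=\overrightarrow{y(t)}$, and stacking over $t=1,\dots,T$ gives $\Phi\Theta^{\star}=Y$.

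To conclude I would invoke hypothesis (2). Since $\Phi\Theta^{\star}=Y$ holds exactly, the cost $|Y-\Phi\Theta|^{2}$ attains its global minimum value $0$ at $\Theta=\Theta^{\star}$. Invertibility of the $M\times M$ matrix appearing in \ref{standard_CPA} (the Gram matrix $\Phi^{T}\Phi$, equivalently $\Phi$ having full column rank) forces the least-squares minimizer to be unique, so the $\Theta$ returned by \ref{standard_CPA} must coincide with $\Theta^{\star}$; that is, $\theta_i=1$ on the present atoms and $\theta_i=0$ on the absent ones. Equivalently, and perhaps cleaner to write out, one substitutes $Y=\Phi\Theta^{\star}$ directly into \ref{standard_CPA} and cancels $(\Phi^{T}\Phi)^{-1}(\Phi^{T}\Phi)=I$.

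I do not anticipate a genuine obstacle here — the argument is essentially one line once $\Phi\Theta^{\star}=Y$ is in hand — but a couple of bookkeeping points deserve care. First, hypothesis (1) must be read as: the atoms of the active set are pairwise orthogonal and (as is standard for such matched-filter estimates, even though the excerpt does not spell it out) unit-normalized; the normalization is precisely what makes \ref{cpa_rough_estimate} equal $A_i(t)$ on the nose, and hence what yields the value $1$ rather than $1/|\overrightarrow{B_i}|^{2}$. Atoms outside the active set need no orthogonality at all: they are pinned to $0$ purely by the uniqueness supplied by hypothesis (2), not by any direct computation. Second, there is a mild shape mismatch between the $TN\times M$ matrix $\Phi$ of \ref{phi_defini} and the ``$M$ by $M$'' description in condition (2); interpreting \ref{standard_CPA} as the ordinary least-squares solution $\Theta=(\Phi^{T}\Phi)^{-1}\Phi^{T}Y$, with the $M\times M$ Gram matrix $\Phi^{T}\Phi$ being the one required invertible, makes everything consistent.
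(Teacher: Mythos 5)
Your proof is correct and follows essentially the same route as the paper: verify that the claimed binary $\Theta$ reproduces every observation exactly (zero squared error) using the orthonormality of the active atoms, then invoke invertibility of the Gram matrix to conclude the least-squares minimizer is unique and hence equals this $\Theta$. Your two bookkeeping remarks are also well taken — the paper's proof indeed assumes $\overrightarrow{B_{j(n)}}\cdot\overrightarrow{B_{j(n)}}=1$ (unit normalization) even though the theorem statement only says ``orthogonal,'' and the $M$ by $M$ matrix in equation \ref{standard_CPA} should be read as $\Phi^{T}\Phi$ rather than $\Phi\Phi^{T}$ for the dimensions to be consistent.
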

\begin{proof}
We will assume that the observed signal $\overrightarrow{y(t)}$ originates from an active set  $A$ of $k$ dictionary atoms indexed by $j(1),\dots,j(k)$ that are mutually orthogonal, that is, the observations are given by:
\begin{equation}\label{definition_proof}
\overrightarrow{y(t)}=\sum_{l=1}^{k} A_{j(l)}(t)\overrightarrow{B_{j(l)}}\
\end{equation}
where the atoms present obey:
\[ {\overrightarrow{B_{j(n)  }} \cdot \overrightarrow{B_{j(m)}}} =
  \begin{cases}
    1       & \quad \text{if } n=m \\
    0       & \quad \text{if } n\not=m \text{.} \\ 
  \end{cases}
\] 
 If we replace this signal into equations \ref{estimate_descript_cpa}  and \ref{cpa_rough_estimate} we obtain:
\begin{equation}\label{replaced_sparse_y}
\widehat{y(t)}  = \sum_{i=1}^{M} \theta_i\left(\overrightarrow{y(t)} \cdot \overrightarrow{B_i}\right)\overrightarrow{B_i}\ 
= \sum_{i=1}^{M} \theta_i\left(\sum_{l=1}^{k} A_{j(l)}(t)\overrightarrow{B_{j(l)}}\\\cdot \overrightarrow{B_i}\right)\overrightarrow{B_i}\ \text{.}
\end{equation}
Using the orthogonality condition for the atoms in the active set, we can simplify our estimate as:
\begin{equation}\label{simple_sparse_y}
\widehat{y(t)}  = \sum_{i\in A}^{} \theta_i A_i(t) \overrightarrow{B_i} +\sum_{j\notin A}^{} \theta_j \left( \sum_{i\in A}^{} A_i(t) \left(\overrightarrow{B_i}\cdot \overrightarrow{B_j}\right)  \right) \overrightarrow{B_j}\text{.}
\end{equation}
If we replace the following solution for the presence parameters 
\begin{equation}
\begin{cases}
    \theta_i=1       & \quad \text{for }i \in{A} \\
    \theta_i=0       & \quad \text{for }i \notin{A} \\
  \end{cases}
\end{equation}
into equation \ref{simple_sparse_y}, we obtain:
\begin{equation}\label{estimate_equal_observ}
\widehat{y(t)}  =\sum_{i\in A}^{}  A_i(t) \overrightarrow{B_i}\text{.}
\end{equation}
With this choice of the presence parameters, our estimate $\widehat{y(t)}$ and the actual observation $\overrightarrow{y(t)}$ are identical for all temporal observations. This would result in the mean square error having a value of zero. Therefore, there are no other solutions that could produce a smaller value of the squared error. The solution is also unique because we have assumed that the matrix $\Phi{\Phi}^T$ is invertible. Notice that this solution for the presence parameters is independent of the contribution of an atom to the observed signal, being either 1 or 0. This contrasts to algorithms that directly determine the amplitude of the contribution $A_i(t)$,  where the estimated variables would be larger for larger contributions.
\end{proof}
The theorem depends on the orthogonality of the atoms present in the observed signal.  Although this condition seems restrictive, the restrictive isometry property \cite{Baraniuk2008AMatrices} (RIP) states that any set of $k$ atoms of a random dictionary of size $M$ would approximate orthogonality, as long as the dimensions of the dictionary are:
\begin{equation} \label{RIP}
  N ≥ k \log{\left(M/k\right)} \text{.}
\end{equation}
Given that the maximum size of the dictionary $M$ grows exponentially with the number of dimensions $N$ of the signal, CPA can handle large dictionaries with a moderately sized number of dimensions $N$.
\section {CPA finds a sparse solution using the L2 regularization for the presence parameters}\label{CPA_with_L2}
In order to identify the dictionary atoms that are present, CPA requires that the matrix $\Phi{\Phi}^T$  has an inverse. For conditions where $(T*N)< M$, this condition would not be satisfied. Therefore, CPA needs to use regularization to find a sparse solution. Surprisingly, CPA does not need to use the L1 regularization but can use the L2 regularization or Tikhonov regularization to identify sparse solutions. The modified cost function is:
\begin{equation}\label{cost_with_L2_CPA}
Cost =\sum_{t=1}^{T} \left|\overrightarrow{y(t)}-\widehat{y(t)}\right|^2+\lambda \sum_{i=1}^{M} {\theta_i}^2 \text{,}
\end{equation}
where the CPA estimate $\widehat{y(t)}$ is described by equations \ref{estimate_descript_cpa} and \ref{cpa_rough_estimate}.  We can also write this cost function as:
\begin{equation}\label{cost_with_L2_CPA_matrix}
Cost = \left(Y-\Phi\Theta\right)^T\left(Y-\Phi\Theta\right)+\lambda\Theta^T \Theta\text{.}
\end{equation}
In contrast to the minimization problems  \ref{cost_with_regular} and \ref{cost_with_regular_MMV} that use the L1 norm, there is an analytical solution for this minimization problem, which is given by:
\begin{equation}\label{regularized_CPA}
\Theta=(\Phi{\Phi}^T +\lambda I )^{-1}\Phi^TY \text{,}
\end{equation}
where $I$ is the $M$ by $M$ identity matrix.
The L2 regularization does not provide a sparse representation when it is used to directly determine the contributions $A_i(t)$ of the dictionary atoms, that is, when the cost function is:
\begin{equation}\label{L2_cost_with_regular_MMV}
Cost =\sum_{t=1}^{T} \left|\overrightarrow{y(t)}-\widehat{y(t)}\right|^2+\lambda \sum_{t=1}^{T} \sum_{i=1}^{M} \left(A_i(t)\right)^2\ \text{.}
\end{equation}
Therefore, in general, the L1 norm has to be used for determining sparse representations.
\begin{figure}
\centering
\includegraphics[width=0.6\textwidth]{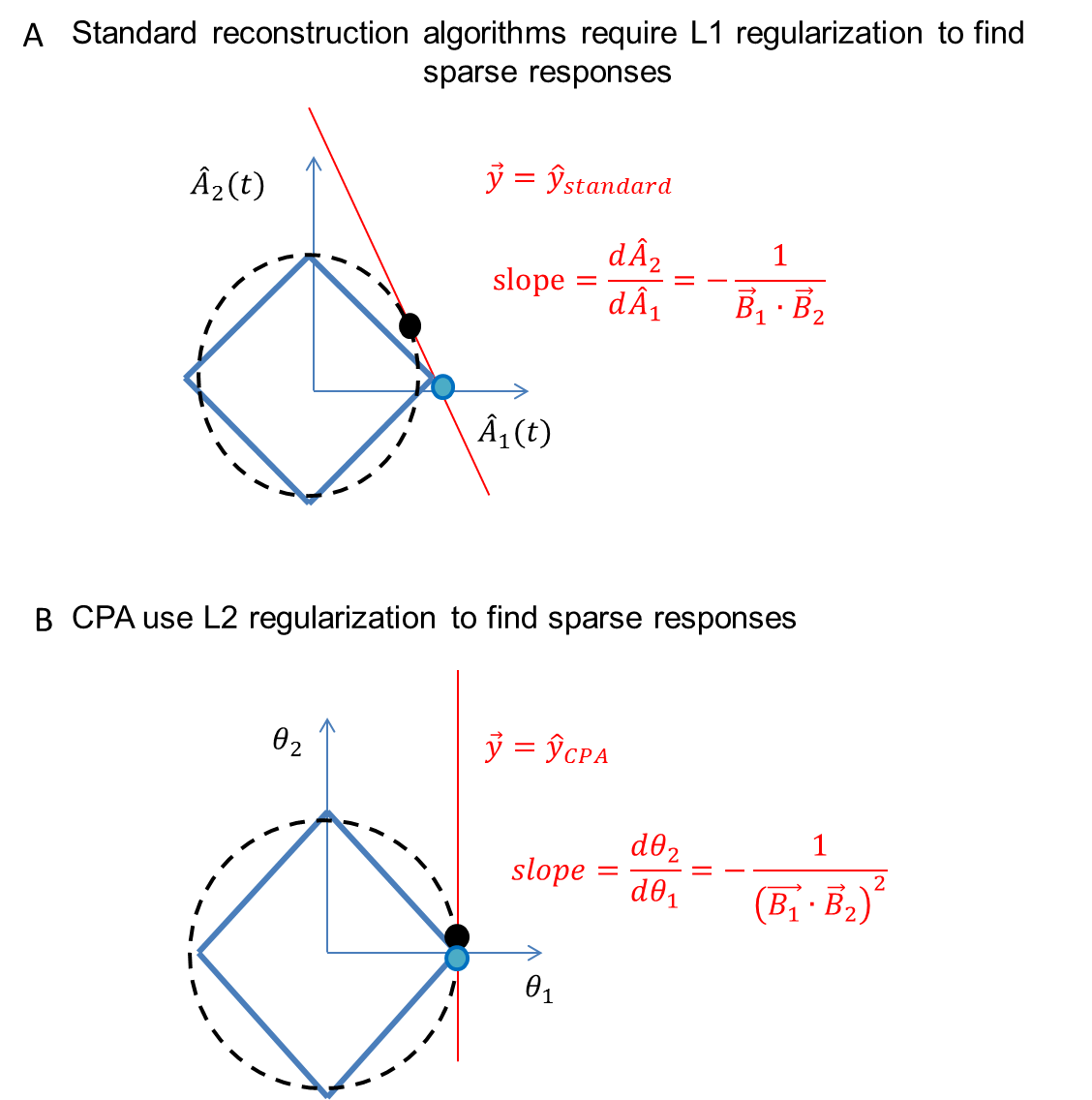}
\caption{\label{fig:1}\textbf{A} Standard methods require the L1 norm to find sparse solutions. \textbf{B} CPA change in geometry permits sparse solutions to be found using the L2 norm. }
\label{F1}
\end{figure}
How does the L2 minimization of the presence parameters  $\theta_i$ in CPA determine a sparse distribution for the dictionary atoms? The reason lies in the change in the geometry of the manifold of possible solutions (see Figure \ref{F1}). In order to gain some intuition, we will use a simple signal that is generated from a single dictionary atom yielding a single temporal observation:
\begin{equation}\label{simple_signal_descript}
\overrightarrow{y}  =  A_1 \overrightarrow{B_1}\ \text{.}
\end{equation}
We will assume a small dictionary of two atoms: $\overrightarrow{B_1}$ and $\overrightarrow{B_2}$. In standard approaches, we would estimate the contributions $\widehat{A_1}$ and $\widehat{A_2}$. All the solutions to the equation  $\overrightarrow{y}=\widehat{y}=  \widehat{A_1}\overrightarrow{B_1}+\widehat{A_2}\overrightarrow{B_2}$  describe a line in the space $\left(A_1,A_2 \right)$. The sparsest solution is the intersection of this solution line with the horizontal axis, that is:
\begin{equation}\label{sparse_solution}
\widehat{A_1}  =  A_1 \text{ and } \widehat{A_2}  =  0 \text{.}
\end{equation}
We can show that the slope of the solution line is given by:
\begin{equation}\label{slope_normal}
\frac{d\widehat{A_2}}{d\widehat{A_1}} =-\frac{1}{\overrightarrow{B_1} \cdot \overrightarrow{B_2}} \text{.}
\end{equation}
On the other hand, the family of circles
\begin{equation}\label{family_circles}
\widehat{A_1}^2+\widehat{A_2}^2=k
\end{equation}
are the loci with an equal L2 norm. The minimum L2 norm of the solutions is determined by the radius of the largest circle that is enclosed by the solution line $\overrightarrow{y(t)}=\widehat{y(t)}$, where the tangent point is the L2 minimal solution. We notice that the steeper the slope of the solution line, the closer the minimum L2 norm solution would be to the sparsest solution. Given this slope, the L2 minimal solution would be, in general, far from the sparsest solution. In contrast, the family of squares
\begin{equation}\label{family_squares}
|\widehat{A_1}|+|\widehat{A_2}|=k
\end{equation}
represents the loci with an equal L1 norm. The intersection between the largest square enclosed by the solution line corresponds to the sparsest solution.
Conversely, in the space described by the CPA presence parameters $\left(\theta_1,\theta_2 \right)$, the slope of the solution line $\overrightarrow{y(t)}=\widehat{y(t)}=\theta_1\left(y(t)\cdot\overrightarrow{B_1}\right)\overrightarrow{B_1} + 
\theta_2\left(y(t)\cdot\overrightarrow{B_2}\right)\overrightarrow{B_2}$ is: 
\begin{equation}\label{slope_cpal}
\frac{d\theta_1}{d\theta_2} =-\frac{1}{\left( \overrightarrow{B_1} \cdot \overrightarrow{B_2} \right)^2} \text.
\end{equation}
The slope of the solution line in CPA has increased by a factor of $\frac{1}{\left( \overrightarrow{B_1} \cdot \overrightarrow{B_2} \right)}$ compared to other methods. For a large enough number of dimensions $N$, the dictionary atoms are close to being orthogonal to each other, and this slope increase is very large. Having a large slope for the line $\overrightarrow{y(t)}=\widehat{y(t)}$ makes the L2 minimal solution very close to the sparsest solution, meaning that the values of presence parameters of the atoms present are much larger than the values of the presence parameters of the atoms that are not present. We can formalize this idea, and we will show that the L2 regularized solution can deal with very large dictionaries, where  $M$,  the  total number of dictionary atoms, grows exponentially with $N$, the number of dimensions. 
\begin{theorem}
Assuming that:
\begin{enumerate}
\item dictionary atoms that are simultaneously present are orthogonal to each other and, \label{cond_1}
\item the amplitude modulations of the atoms that are present are uncorrelated to each other in time, \label{cond_2}
\end{enumerate}
then the regularized L2 solution of CPA will find an average presence parameter $|\overline{\theta_i}|$ for $i \in A$ that is larger than $|\theta_k| \text{ , }\forall k \notin A $  as long as the number of dimensions $N$ is:
\begin{equation} \label{L2_CPA_condition}
  N > 4k^2 \log{\left(M\right)} \text{,}
\end{equation}
where $k$ is the number of simultaneously present dictionary atoms.  
\end{theorem}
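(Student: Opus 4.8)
The plan is to solve the regularised least-squares problem \ref{cost_with_L2_CPA_matrix} explicitly via its normal equations, read off each presence parameter in closed form in terms of the \emph{rough-estimate energies}
\[
E_i\;:=\;\sum_{t=1}^{T}\widehat{A_i(t)}^2\;=\;\sum_{t=1}^{T}\bigl(\vec B_i\cdot\vec y(t)\bigr)^2,
\]
and then compare $E_i$ for present versus absent atoms using the two hypotheses and a concentration bound on inner products of a random dictionary (I assume throughout the usual normalisation $\left|\vec B_i\right|^2=1$). First I would record the algebraic skeleton: the minimiser of \ref{cost_with_L2_CPA_matrix} satisfies $(\Phi^{T}\Phi+\lambda I)\Theta=\Phi^{T}Y$, and a direct computation from \ref{projection_matrix}--\ref{phi_defini} gives $(\Phi^{T}\Phi)_{ii}=E_i$, $(\Phi^{T}Y)_i=E_i$ (so the diagonal of $\Phi^{T}\Phi$ and the right-hand side coincide), and $(\Phi^{T}\Phi)_{ij}=(\vec B_i\cdot\vec B_j)\sum_t\widehat{A_i(t)}\widehat{A_j(t)}$ off the diagonal. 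Hypothesis \ref{cond_1} forces the latter to vanish whenever $i,j\in A$, so the present block of $\Phi^{T}\Phi$ is diagonal, and orthonormality moreover gives $\widehat{A_i(t)}=A_i(t)$ for $i\in A$, so that $E_i=\left|A_i\right|^2$: the rough-estimate energy of a present atom is exactly its true energy in the signal.

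Substituting these facts into the $i$-th normal equation gives, for every atom,
\[
\theta_i\;=\;\frac{E_i-\sum_{j\ne i}(\Phi^{T}\Phi)_{ij}\,\theta_j}{E_i+\lambda}.
\]
The second step is to show the coupling sum contributes only a lower-order correction, so that $\theta_i\approx E_i/(E_i+\lambda)$ for all $i$; here hypothesis \ref{cond_2} is used, killing the temporal cross-correlations $\sum_t A_i(t)A_{j(l)}(t)$ for $j(l)\ne i$ that otherwise inflate $\sum_t\widehat{A_i(t)}\widehat{A_j(t)}$, shrinking each coupling entry to order $(\vec B_i\cdot\vec B_j)^2\left|A_i\right|^2$. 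Since $x\mapsto x/(x+\lambda)$ is strictly increasing on $[0,\infty)$, the claimed inequality then reduces to the energy comparison $E_i>E_k$ for $i\in A$ and $k\notin A$; the averaging implicit in $|\overline{\theta_i}|$ (over the random draws of dictionary and amplitudes) absorbs the residual coupling fluctuations.

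The third step bounds $E_k$ for an absent atom $k$. By \ref{cpa_rough_estimate} and \ref{definition_proof}, $\widehat{A_k(t)}=\sum_{l=1}^{k}A_{j(l)}(t)(\vec B_{j(l)}\cdot\vec B_k)$, so Cauchy--Schwarz in $l$ gives $\widehat{A_k(t)}^2\le k\bigl(\max_l(\vec B_{j(l)}\cdot\vec B_k)^2\bigr)\sum_l A_{j(l)}(t)^2$; summing over $t$ and using that all present atoms carry comparable energy $\left|A_{j(l)}\right|^2\approx\left|A_i\right|^2$ yields $E_k\le k^{2}\bigl(\max_{a\ne b}(\vec B_a\cdot\vec B_b)^2\bigr)\left|A_i\right|^2$. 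For a random overcomplete dictionary each $\vec B_a\cdot\vec B_b$ with $a\ne b$ is mean zero with variance $1/N$ and sub-Gaussian tails, so a union bound over the fewer than $M^{2}$ pairs gives $\max_{a\ne b}(\vec B_a\cdot\vec B_b)^2\le 4\log M/N$ with high probability. Combining, $E_k\le\tfrac{4k^{2}\log M}{N}\left|A_i\right|^2<\left|A_i\right|^2=E_i$ exactly when $N>4k^{2}\log M$, which by the monotonicity step forces $|\overline{\theta_i}|>|\theta_k|$ for every $k\notin A$.

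The main obstacle is the second step: rigorously showing that inverting the full $M\times M$ matrix $\Phi^{T}\Phi+\lambda I$ does not scramble the ordering of the $\theta_i$ induced by the diagonal, since $M$ may be exponential in $N$ and so a crude Neumann-series estimate on the $(M-k)$ absent-atom couplings is far too lossy. I expect this is handled by a block elimination that exploits hypothesis \ref{cond_1} (the present block is exactly diagonal and self-decoupled, and couples to the absent block only through entries of size $O((\vec B_i\cdot\vec B_j)^2)$), or equivalently by passing to expectations under hypothesis \ref{cond_2} --- which is presumably the precise meaning of the ``average presence parameter'' in the statement. Fixing the probabilistic model for the dictionary and tracking constants so the threshold lands exactly on $4k^{2}\log M$ is then routine.
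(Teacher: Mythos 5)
Your reduction to an energy comparison is appealing, and the final coherence calculation (Cauchy--Schwarz giving $E_k\le k^2u^2E_i$ with $u^2\approx 4\log M/N$, hence the threshold $N>4k^2\log M$) matches the coherence step the paper itself uses. But the argument has a genuine gap, and it sits exactly at the point you yourself flag: step two, the claim that the solution of the full regularized normal equations satisfies $\theta_i\approx E_i/(E_i+\lambda)$, i.e.\ that inverting $\Phi^T\Phi+\lambda I$ does not reorder the parameters relative to the diagonal. Each coupling $(\Phi^T\Phi)_{ij}$ with $j\notin A$ is indeed of order $(\vec B_i\cdot\vec B_j)^2\,E_i\sim(\log M/N)E_i$, but there are $M-k$ of them, and in the regime the theorem targets $M$ can be exponential in $N$, so the aggregate $\sum_{j\notin A}(\Phi^T\Phi)_{ij}\theta_j$ is not controlled by any entrywise or Neumann-series bound; one must use the self-consistent smallness of the absent-atom $\theta_j$'s, which is precisely what is being proved. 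You acknowledge this and defer it to an unspecified ``block elimination,'' so the central inequality is asserted rather than established, and the energy comparison alone does not yield the conclusion.

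The paper avoids this inversion problem by taking a different route. It never analyzes $(\Phi^T\Phi+\lambda I)^{-1}$; instead it passes to the time-averaged stationarity equations (its system \ref{cpa_system_of_equations}, derived using both hypotheses), notes that the indicator solution satisfies them, and characterizes the minimum-L2-norm solution of that underdetermined system with Lagrange multipliers. Because both the present-atom parameters $\theta_i$ and the absent-atom parameters $\theta_k$ are expressed through the same multipliers, it can compare them directly, obtaining $|\theta_k|\le u\left|\sum_{i\in A}\theta_i\right|=ku\,|\overline{\theta_i}|$, so that $ku<1$ suffices; with $u\to 2\sqrt{\log M}/\sqrt{N}$ this gives \ref{reg_cpa_limits}. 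Two further mismatches with the statement: the paper's $\overline{\theta_i}$ is the average over $i\in A$ (which is why only the \emph{average} present parameter is claimed to dominate), not an average over random draws of the dictionary and amplitudes as you interpret it; and the paper's object is the minimum-norm solution of the averaged equations (the $\lambda\to0$ limit), not the finite-$\lambda$, finite-sample normal equations you set up, so hypothesis \ref{cond_2} enters in deriving the averaged system rather than as a perturbative correction. To salvage your route you would need a quantitative decoupling lemma for the absent block; as written, the proposal does not prove the theorem.
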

\begin{proof}
The expected square error $\langle V\rangle$ between the observation and the presence parameters is given by:
\begin{equation}\label{temporal_error_function}
\langle V\rangle=\langle \left| \overrightarrow{y(t)}-\widehat{y(t)} \right|^2 \rangle \text{,}
\end{equation}
where the brackets $\langle \rangle$ indicate the average over time. 
The values of the presence parameters $\theta_i$ that minimize the expected square error can be calculated by taking the derivatives for all the presence parameters $\theta_i,i=1,\dots,M$:
\begin{equation}\label{derivative_over_theta}
\frac{d\langle V\rangle}{d\theta_i}=0\text{.}
\end{equation}
If we define as $A$ the set of $k$ dictionary atoms that are present in a signal, we can simplify (for details on the derivation, see \cite{Otazu2011AScenes.}) the $M$ equations defined in \ref{derivative_over_theta}, using the assumptions \ref{cond_1} and \ref{cond_2}, to the following $M$ equations:
\begin{equation}\label{cpa_system_of_equations}
\begin{cases}
0=\theta_l+\sum_{k\notin{A}}^{} {\theta_k (c_{k,l})^2 }-1\         & \quad \text{for }l \in{A} \\

0=\sum_{i\in A}^{} { c_{l,i}\langle (A_i)^2\rangle \left(\theta_i c_{l,i} -c_{l,i}+ \sum_{k\notin A}^{}{c_{i,k}\theta_k c_{k,l}} \right) }           & \quad \text{for }l \notin{A}\text{, } \\
  \end{cases}
\end{equation}
where 
\begin{equation}\label{def_ckl}
c_{k,l}=\overrightarrow{B_k} \cdot \overrightarrow{B_l} \text{.}
\end{equation}
This set of M equations is satisfied by the following solution, which identifies the atoms present in a signal:
\begin{equation} \label{solution_CPA}
\begin{cases}
\theta_i=1,& \quad \text{for } i \in{A} \\
\theta_i=0,& \quad \text{for } i \notin{A}\text{.} \\
\end{cases}
\end{equation}
However, if $\Phi{\Phi}^T$  does not have an inverse, solution \ref{solution_CPA} is just one of multiple possible solutions. We will show that the solution with the minimum L2 norm of the presence parameters identifies the sources present. 
In order to find the solution with the minimum L2 norm, we will use the Lagrange multiplier method.  We will define as a cost function the sum of the squares of the values of the presence parameters. We add as constraints the $M$ equations defined in \ref{cpa_system_of_equations},  multiplied by factors $\lambda_i, i=1\dots M$, which are the Lagrange multipliers.
The new cost function is:
\begin{equation}\label{CPA_regular_cost_function}
C=\sum_{i=1}^{M}{(\theta_i)^2} 
+\sum_{i \in A}^{}{\lambda_i \left( \theta_i+\sum_{k\notin{A}}^{} {\theta_k (c_{k,i})^2 }-1\ \right) }
+\sum_{l \notin{A}}^{} { \lambda_l  \left( \sum_{i\in A}^{} { c_{l,i}\langle (A_i)^2\rangle \left(\theta_i c_{l,i} -c_{l,i}+ \sum_{k\notin A}^{}{c_{i,k}\theta_k c_{k,l}} \right) }  \right) }\text{.}
\end{equation}
If we take the derivatives for $\theta_k \text{, } k\notin{A}$ and make them equal to zero, we obtain:
\begin{equation}\label{dc_dtheta_k}
\frac{dC}{d\theta_k}=2 \theta_k 
+\sum_{i \in A}^{}{\lambda_i   { (c_{k,i})^2 }  }
+\sum_{l \notin{A}}^{} { \lambda_l  \left( \sum_{i\in A}^{} { c_{l,i}\langle (A_i)^2\rangle  {c_{i,k} c_{k,l}}  }  \right) } =0\text{.}
\end{equation}
We can express the presence parameter $\theta_k \text{ } k\notin A$ as a function of the Lagrange multipliers,  yielding:
\begin{equation}
\theta_k =-\sum_{i \in A}^{}{\frac{\lambda_i} {2}   { (c_{k,i})^2 }  }
-\sum_{l \notin{A}}^{} { \frac{\lambda_l}{2}  \left( \sum_{i\in A}^{} { c_{l,i}\langle (A_i)^2\rangle  {c_{i,k} c_{k,l}}  }  \right) } \text{.}
\end{equation}
If we take the derivatives for $\theta_i \text{, } i\in{A}$ and make them equal to zero, we obtain:
\begin{equation}\label{dc_dtheta_in}
\frac{dC}{d\theta_i}=2 \theta_i  
+ \lambda_i 
+ \sum_{l \notin{A}}^{} { \lambda_l    { (c_{l,i})^2\langle (A_i)^2\rangle }   } =0 \text{.}
\end{equation}
We can also express the presence parameter $\theta_i \text{ } i\in A$ as a function of the Lagrange multipliers, yielding: 
\begin{equation}\label{theta_in_lagrange}
\theta_i=-\frac{\lambda_i}{2}-\sum_{l \notin{A}}^{} { \frac{\lambda_l}{2}    { (c_{l,i})^2\langle (A_i)^2\rangle }   } \text{.}
\end{equation}
The presence parameters $\theta_i$ that belong to the active set are related to the presence parameters $\theta_k$ that are not part of the active set by way of the Lagrange multipliers. We can use these relationships to infer the relative sizes of the presence parameters. 
If we calculate the sum of all $\theta_i \text{ , } i\in A$, we obtain:  
\begin{equation}\label{summ_all_theta_i}
\sum_{i \in A}^{}{\theta_i}= \sum_{i \in A}^{}{\left( -\frac{\lambda_i}{2}-\sum_{l \notin{A}}^{} { \frac{\lambda_l}{2}    { (c_{l,i})^2\langle (A_i)^2\rangle }   } \right)} \text{.}
\end{equation}
We can find an upper bound for $\theta_k \text{ , }k \notin A$, given by:
\begin{equation}
\left|\theta_k\right| =\left|-\sum_{i \in A}^{}{\frac{\lambda_i} {2}   { (c_{k,i})^2 }  }
-\sum_{l \notin{A}}^{} { \frac{\lambda_l}{2}  \left( \sum_{i\in A}^{} { c_{l,i}\langle (A_i)^2\rangle  {c_{i,k} c_{k,l}}  }  \right) }\right| \leq \left|\sum_{i \in A}^{}{\theta_i} \right|u \text{,}
\end{equation}
where $u$  is the mutual coherence\cite{Mallat1993MatchingDictionaries} of the dictionary matrix, that is:
\begin{equation}
u=\max {|c_{i,j}|}, i\not = j \text{.}
\end{equation}
We can use the average $\overline{\theta_i} $ to obtain the following inequality:
\begin{equation}
\left|\theta_k\right| \leq \left|\sum_{i \in A}^{}{\theta_i} \right|u = \left|\overline{\theta_i}\right|ku<\left|\overline{\theta_i}\right|\text{.}
\end{equation}
The last inequality holds as long as:
\begin{equation} \label{ineq_coherence}
ku<1.
\end{equation}
For a random dictionary, and for large number of dimensions $N$, the mutual coherence $u$ converges to \cite{Cai2011LIMITINGMATRICESc}:
\begin{equation}
u\rightarrow 2\frac{\sqrt[]{\log M}}{\sqrt[]{N}}.
\end{equation}
So our inequality \ref{ineq_coherence} will become:
\begin{equation}
 2k\frac{\sqrt[]{\log M}}{\sqrt[]{N}}<1.
\end{equation}
Therefore, the number of dimensions $N$ that guarantees that $|\overline{\theta_i}|$ for $i \in A$ is larger than $|\theta_k| \text{ , }\forall k \notin A $ is given by:
\begin{equation} \label{reg_cpa_limits}
N>4k^2\log{M}.
\end{equation}
\end{proof}
\begin{figure}
\centering
\includegraphics[width=0.6\textwidth]{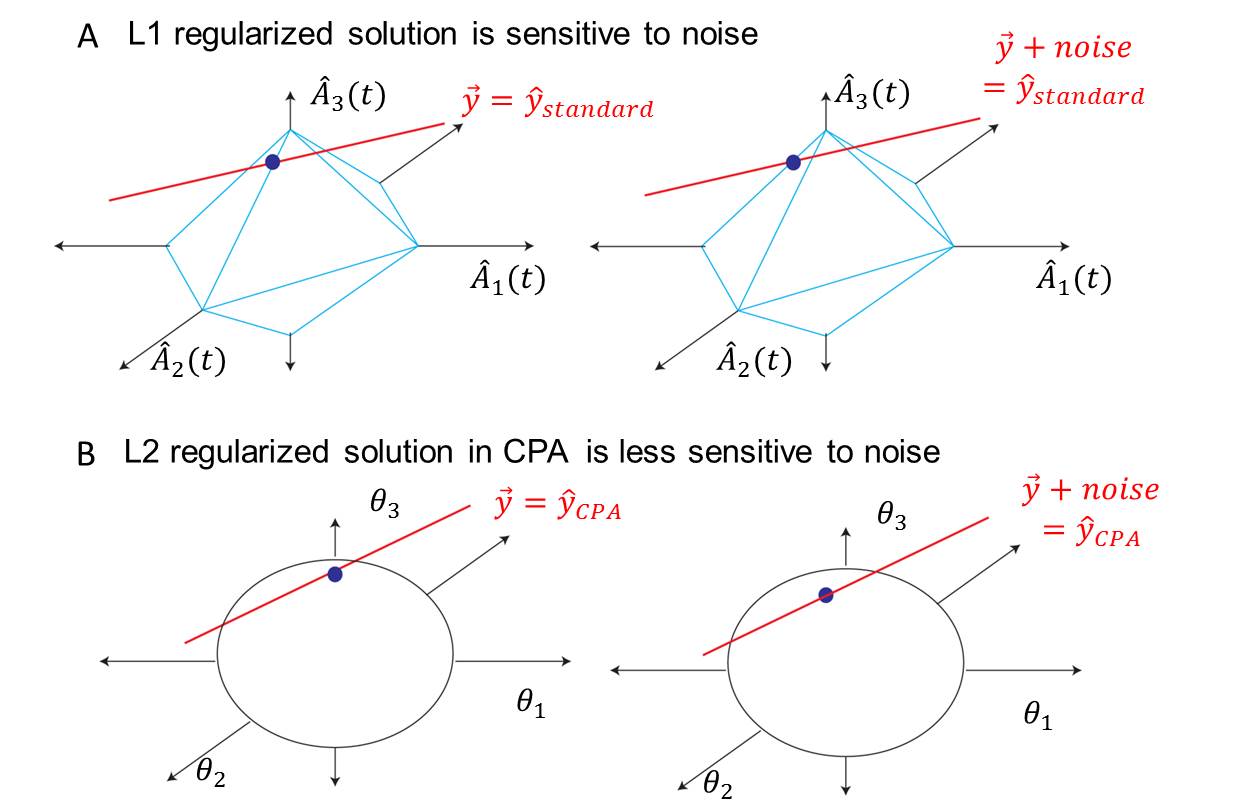}
\caption{\label{fig:1}\textbf{A} Addition of noise can cause the minimal L1 norm solution to switch to a different sparse solution set, represented by the intersection of the solutions line with a different vertex of the L1 ball. \textbf{B} Addition of noise causes less perturbation on the minimal L2 norm solution in CPA, represented by the intersection of the solutions line with the L2 ball. }
\label{F2}
\end{figure}
The number of dimensions required by the regularized CPA is increased by a factor of $4k$ compared to the limit provided by the RIP for reconstructions using the L1 regularization\cite{Candes2005DecodingProgramming}. However, the use of the L2 norm, as opposed to an L1 norm, allows an analytical expression to be found for the solution. In addition, using the smooth L2 confers CPA robustness in the presence of noise  (see Figure \ref{F2}). Intuitively, the solution of CPA using the L2 regularization is given by the intersection of a hyperplane and a hypersphere. Added noise would change the hyperplane, but given the smooth nature of the hypersphere, the new intersection would still be close to the original solution. On the other hand, for sparse representation methods, the solution is given by the intersection of a hyperplane and a high-dimension polyhedron. Perturbing the hyperplane could radically change the intersection point. While still giving a sparse solution, this would yield a different sparse set than the original solution

\section{Efficient implementation of the L2 regularized CPA}\label{iCPA and performance}\label{iCPA}
\begin{figure}
\centering
\includegraphics[width=0.6\textwidth]{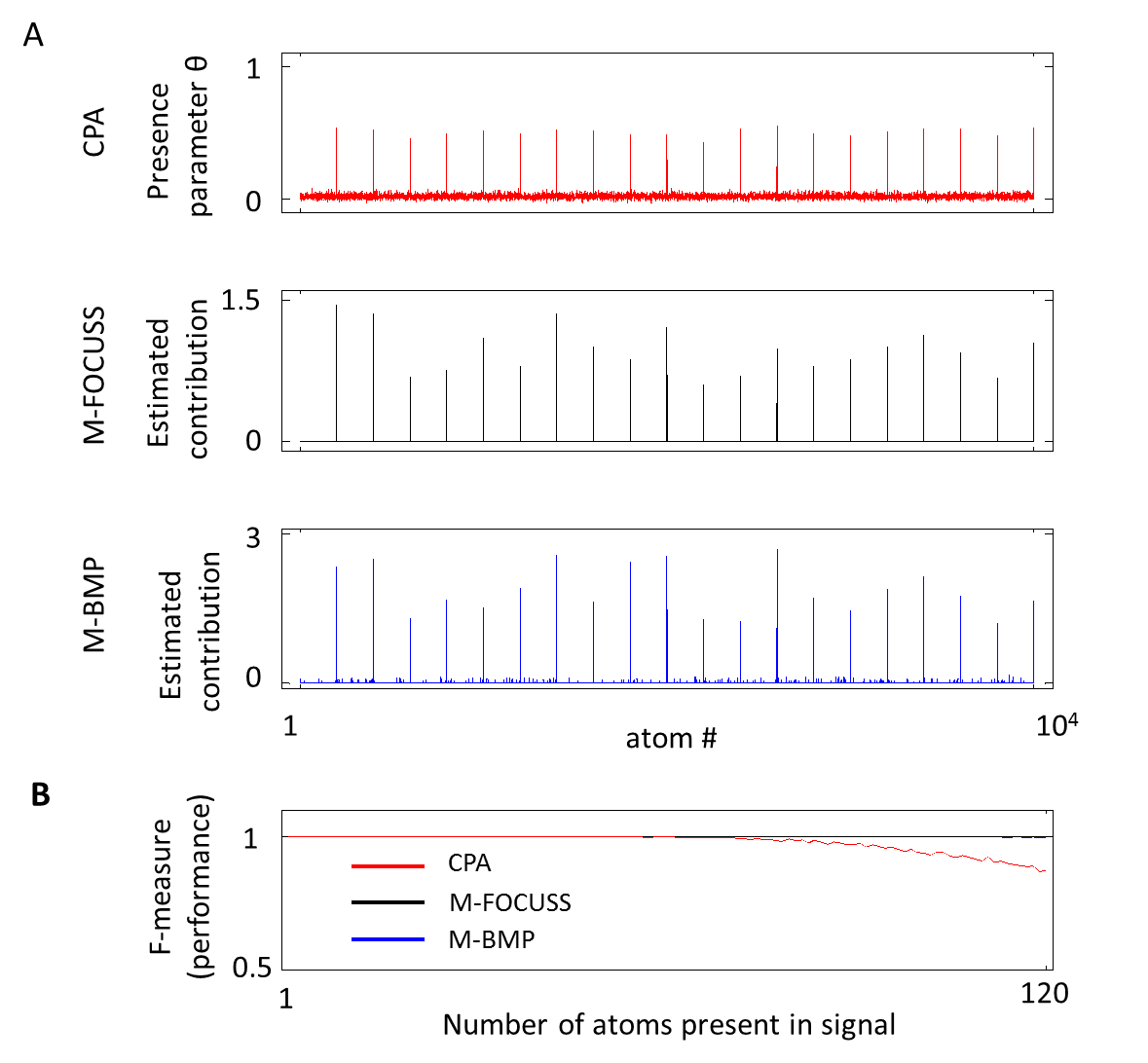}
\caption{\label{fig:1}\textbf{A} Example of the responses of the three algorithms to a signal composed of $k=20$ atoms of dimension $N=500$. The three algorithms had access to a dictionary of $M=10000$ atoms, which included the 20 atoms that generated the signal. \textbf{B} The performance of CPA for complex signals of more than 80 elements diminished compared to the performances of the other algorithms.}
\label{F3}
\end{figure}

\subsection{Iterative solution of CPA}
The CPA estimate is linear with respect to the presence parameters $\theta_i$, and it can be estimated using equation \ref{standard_CPA}. This equation requires the inversion of the $M$ by $M$ matrix $\Phi{\Phi}^T$. For a large $M$, matrix inversion is not numerically stable. It has been shown \cite{Otazu2011AScenes.} that the solution to equation \ref{standard_CPA} can be found in an iterative, numerically stable manner that is more appropriate for a real-time application. Here we will show that a similar set of iterative  equations can be used in the L2 regularized case, that is, to solve equation \ref{regularized_CPA}.

In the iterative version of CPA (or iCPA) the temporal observations $\overrightarrow{y(t)}$  from $t=1\dots T-1$ are processed to calculate a presence parameter set $\Theta(T-1)$. Upon arrival of a new observation $\overrightarrow{y(T)}$, this estimate of the presence parameters is updated to $\Theta(T)$. The update on the presence parameter set is proportional to the estimation error:
\begin{equation} \label{recursive_cpa_theta}
\Theta(T)=\Theta(T-1)+P(T) {\left( \phi(T) \right)}^T \left(\overrightarrow{y(T)}-\widehat{y(T)} \right)\text{, }
\end{equation}
where $\phi(T)$ is the $N$ by $M$ projection matrix as defined in equation \ref{projection_matrix} and calculated using the current observation $\overrightarrow{y(T)}$. The current estimate $\widehat{y(T)}$ is derived from the projection matrix $\phi(T)$  and the previous estimate of the presence parameters $\Theta(T-1)$ as follows:
\begin{equation} \label{recursive_cpa_y_estimate}
\widehat{y(T)} = \phi(T) \Theta(T-1) \text{.}
\end{equation}
The proportionality factor $P(t)$   converts the estimation error $\left(\overrightarrow{y(T)}-\widehat{y(T)} \right)$ into the update of the presence parameters as given by:
\begin{equation} \label{P_T}
P(T) = P(T-1) - P(T-1) {\left( \phi(T) \right)}^T \left( I + \phi(T) P(T-1) {\left( \phi(T) \right)}^T \right)^{-1} \phi(T)P(T-1) \text{, }
\end{equation}
where $I$ is the $N$ by $N$ identity matrix. The proportionality factor $P(t)$ is in fact  the $M$ by $M$ matrix that is calculated by matrix  inversion in equation \ref{standard_CPA}, that is:
\begin{equation}
P(T)=(\Phi{\Phi}^T)^{-1}={\left( \sum_{t=1}^{T}{{\phi(t)}^T\phi(t)}\right)}^{-1} \text{.}
\end{equation}
Equation \ref{P_T} defines an iterative relation where the new $P(T)$ is calculated using the previous value $P(T-1)$ combined with the new value of the observation  $\overrightarrow{y(T)}$ through the projections $\phi(T)$. The real-time implementation is computationally more stable as it requires the inversion of an  $N$ by $N$ matrix as opposed to an $M$ by $M$ matrix $(M>>N)$.

\subsection{Iterative solution of CPA with L2 regularization}
In the case of the L2 regularized CPA, we need to find the inverse of $(\Phi{\Phi}^T +\lambda I )$ to calculate the presence parameters using equation \ref{regularized_CPA}. We will show that we could also use the iterative equation \ref{P_T} to calculate this inverse in an efficient manner.
The L2 regularized version of $P(T)$  is:
\begin{equation}
P_{regularized}(T)=(\Phi{\Phi}^T+\lambda I)^{-1}={\left( \sum_{t=1}^{T}{{\phi(t)}^T\phi(t)}+\lambda I \right)}^{-1} \text{.}
\end{equation}
At T=0, the estimate of $P_{regularized}(T)$ becomes:
\begin{equation}
P_{regularized}(0)={\left(\lambda I \right)}^{-1}=\frac{1}{\lambda}I  \text{.}
\end{equation}
Therefore, if we initialize $P(0)$  to an $N$ by $N$ identity matrix times $\frac{1}{\lambda}$ ,  the iterative equation \ref{P_T} would  calculate  the  L2 regularized solution  with  $\lambda$ as the regularization constant . By having a large  initialization value of $P(0)$, we would prefer an exact reconstruction over a sparse representation. Using this initialization and using equations \ref{recursive_cpa_theta}, \ref{recursive_cpa_y_estimate}, and \ref{P_T}  would produce the L2 regularized solution for CPA in a computationally efficient manner.

\section{CPA outperforms other sparse representation algorithms in the presence of strong novel atoms}\label{CPA_performance}
CPA can identify the atoms present in a signal, and its performance is comparable to other sparse representation algorithms (see Figure \ref{F3}). We compared CPA against two other MMV methods \cite{Cotter2005SparseVectors}, basic matching pursuit (M-BMP), a greedy algorithm that approximates the L1 regularization,  and M-FOCUSS. When a  large number of dictionary atoms were simultaneously present, CPA performance was lower than the performance of the other sparse representation algorithms. Interestingly, the performance was better than what we would have expected given the number of dimensions in equation \ref{reg_cpa_limits} , indicating that CPA has an even better performance than our estimate.

The main advantage of CPA was in the handling of novel atoms. CPA did not produce a sparse representation for a novel atom (see Figure \ref{F4}). Instead, it represented the novel atom with a dense set of presence parameters of small amplitude. The amplitude of this dense set of presence parameters was not very sensitive to the input signal amplitude, keeping them small, even as the contribution of the novel atom increased. This dense representation was different from the sparse representation of a signal composed of known atoms. 
\begin{figure}
\centering
\includegraphics[width=0.6\textwidth]{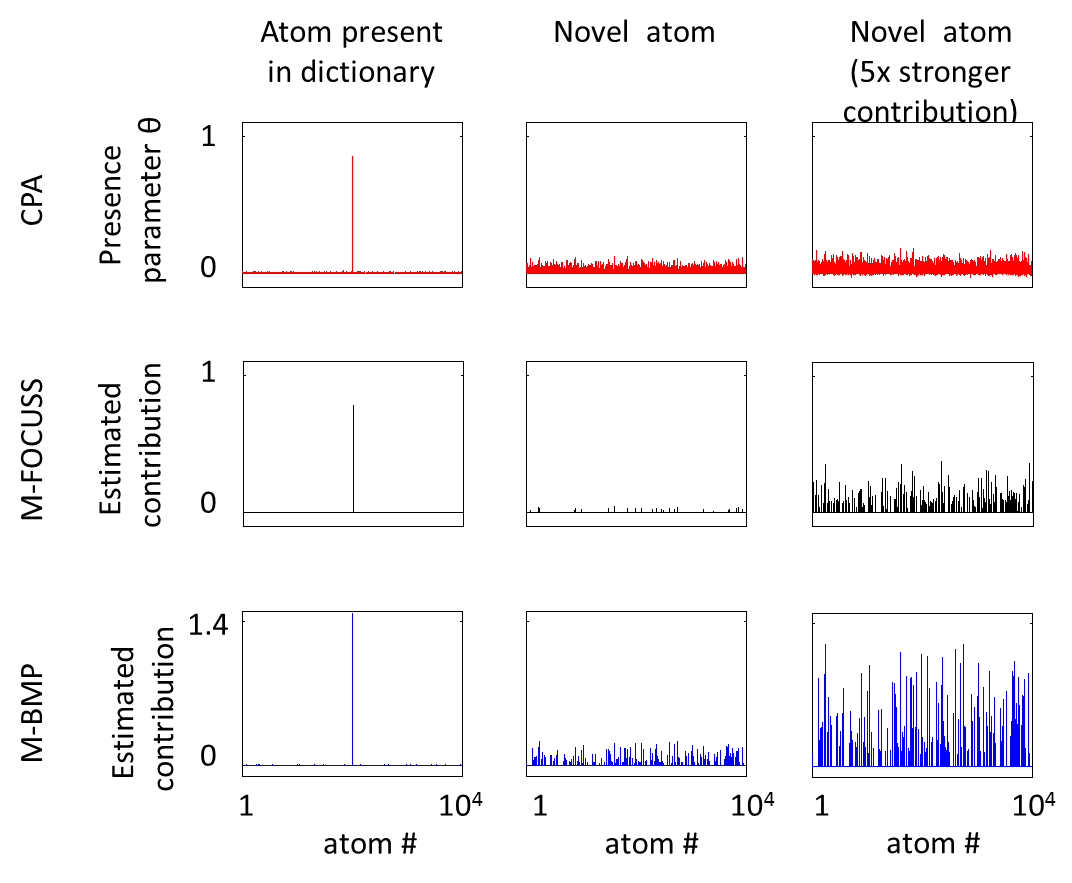}
\caption{\label{fig:1} CPA and the other algorithms generated sparse representations for an atom that belonged to the dictionary (left column). CPA representation of a novel atom was denser and more amplitude-invariant compared to other algorithms (left and right columns).}
\label{F4}
\end{figure}
\begin{figure}
\centering
\includegraphics[width=0.6\textwidth]{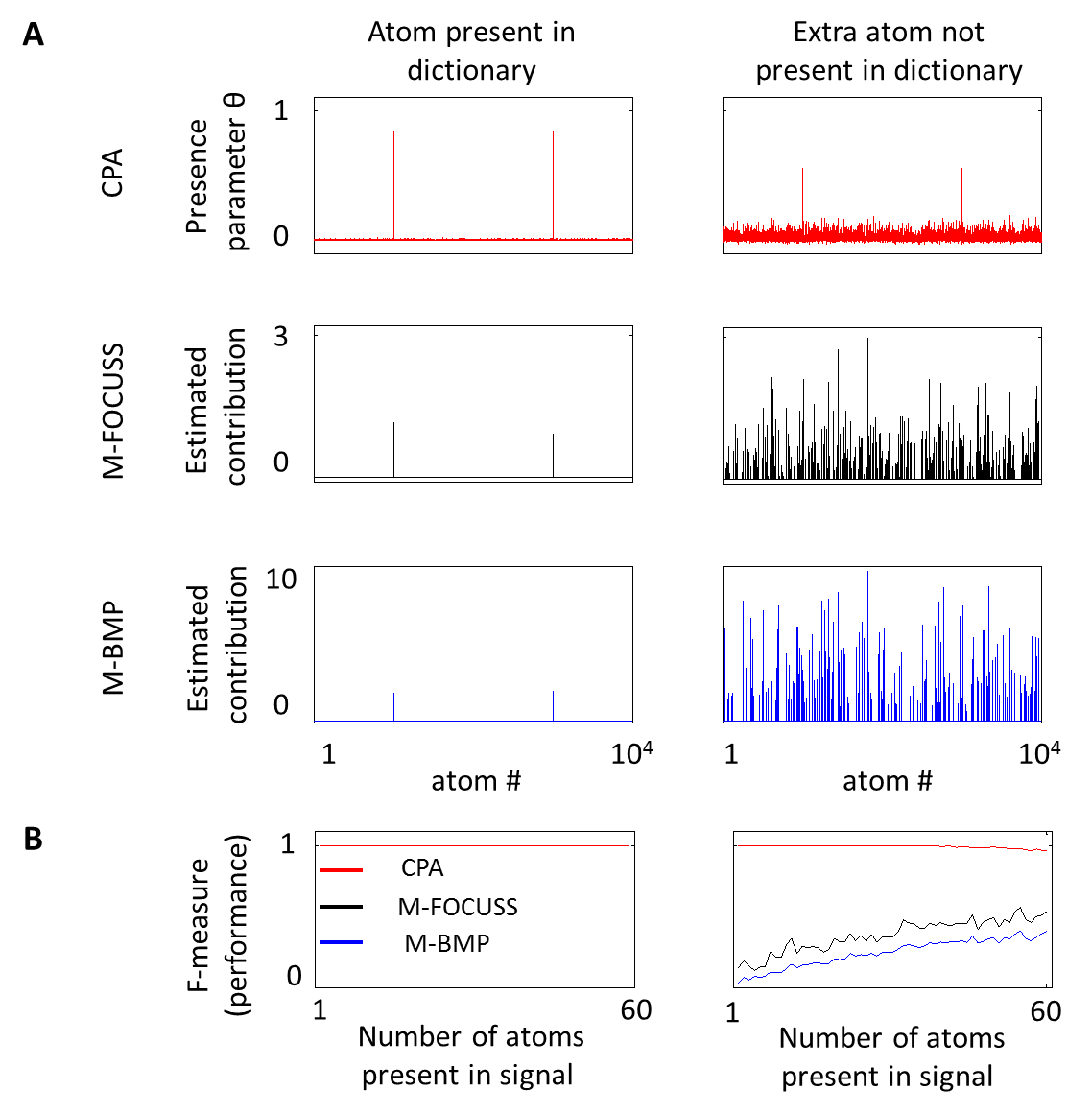}
\caption{\label{fig:1}\textbf{A} Example of the responses of the three algorithms to a signal composed of $k=2$ atoms from the dictionary, in the absence (left column) and presence (right column)  of a novel atom with a strong contribution. CPA presence parameters for the two atoms from the dictionary were very salient, whereas for the other two algorithms, the two atoms were effectively masked by the novel atom contribution. \textbf{B} The performance of CPA in detecting atoms that were in the dictionary ($k=1\dots60$) was comparable to that of the other algorithms in the absence of strong novel atoms (left column). CPA performance for complex signals was more robust to a novel atom than the other algorithms (right column).   }
\label{F5}
\end{figure}
M-FOCUSS and M-BMP did represent a novel atom with a sparse representation whose magnitude depended on the contribution of the novel atom to the signal. This sparse representation of a novel atom was indistinguishable from a representation of a signal composed of multiple known atoms.

In a complex scene that consisted of a strong novel atom and other atoms that belonged to the dictionary, CPA identified the dictionary atoms, with the presence of the strong novel atom adding only small amounts of noise distributed uniformly across the presence parameters (see Figure \ref{F5}). In contrast, the representations produced by the sparse dictionary algorithms were dominated by the novel atom representation, which masked the known atoms. We explored several parameters for the regularization constant $\lambda$ of M-FOCUSS (see Figure \ref{F6}), but we could not find a parameter regime that increased the performance, indicating that the deleterious effects of strong novel atoms could not be overcome with parameter adjustment but are intrinsic to the L1 regularization approach.
\begin{figure}
\centering
\includegraphics[width=0.6\textwidth]{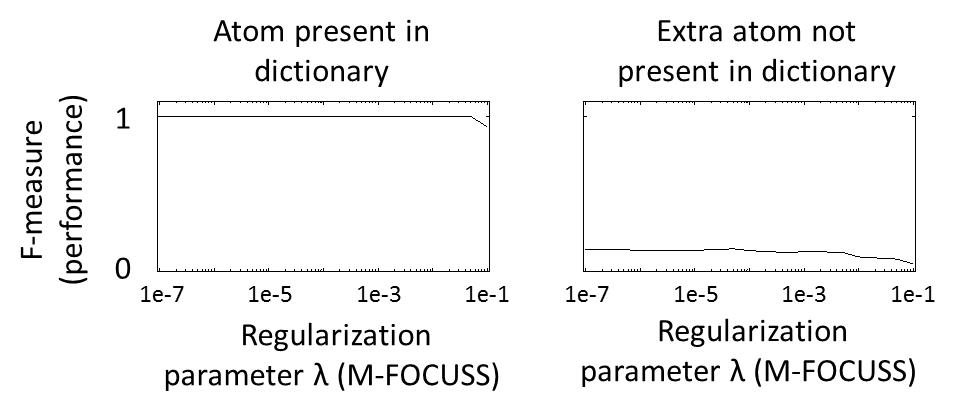}
\caption{\label{fig:1} Performance of M-FOCUSS for detecting atoms in a signal composed of $k=2$ atoms as a function of the regularization parameter $\lambda$ in the absence (left column) and presence (right column) of a novel atom. The performance of M-FOCUSS cannot be improved by changing $\lambda$ over six orders of magnitude.  }
\label{F6}
\end{figure}

\section{Discussion}
We have extended CPA, a biologically inspired algorithm,  for the case of overcomplete dictionaries. We found that by changing the geometry of the solution space, CPA found sparse solutions using the L2 norm. We presented a Kalman filter implementation of the L2 regularized solution for a numerical stable implementation of this MMV algorithm. CPA outperformed other sparse representation algorithms in identifying sources in the presence of strong novel atoms. 

CPA is particularly suitable when we are interested in identifying previously acquired independent atoms that might be present in a signal but there might also be novel atoms that are not part of the dictionary masking them.  This makes CPA suitable for online applications where the currently used dictionary does not contain all the possible atoms that may appear in a signal and we are still learning the dictionary. In contrast, algorithms that specifically minimize the L1 norm will find a sparse solution even if the atom generating the observed signal is not part of the dictionary. In this case the estimated amplitudes $A_i(t)$ would be temporally correlated in time and would not constitute independent sources. 

We found that CPA is computationally more expensive than the other algorithms tested. Evaluation of $P(t)$, using equation \ref{P_T}, is the most computationally expensive calculation, requiring the multiplication of 3 $M$ by $M$ matrices, that is, CPA is $\mathcal{O}(M^3)$. However, all operations for CPA are matrix multiplications and CPA could be optimized for implementation using GPUs.
 
CPA does not produce a sparse representation for novel atoms.  The lack of a sparse representation in the presence of an input might be used as an indication that a new atom should be incorporated into the dictionary.

\section{Appendix}
\subsection{Dictionary atoms}
All simulations were performed using Matlab. For all simulations we used a dictionary of $M=10000$ atoms. The dimensions of the atom were $N=500$. Initially, the atoms in the dictionary were independently generated from a Gaussian distribution of zero mean and a variance of 1. Each dictionary atom was normalized such as $\overrightarrow{B_k}\cdot\overrightarrow{B_k}=1, k=1\dots M$. An identical procedure was used to generate the novel atoms that were not part of the dictionary.
\subsection{Signal generation}
The temporal-varying amplitudes for the atoms present, $A_k(t)$, were taken independently from a Gaussian distribution of zero mean and a standard deviation of 1. We used $T=10$ (total number of observed samples of the signal). The signal generated by combining the dictionary atoms was corrupted by additive Gaussian noise of standard deviation $1/10$ of the standard deviation of the atom-generated signal.  The amplitude of the novel atom in Figures \ref{F5} and \ref{F6} was taken from a Gaussian distribution of zero mean and a standard deviation of 10. 
\subsection{Corrected Projections Algorithm}
We implemented the iterative version of CPA using equations \ref{recursive_cpa_theta}, \ref{recursive_cpa_y_estimate}, and \ref{P_T} . We report the presence parameter after the last sample $T$ is processed. For all the simulations, we used a value of $\lambda=1/2.5$ for the regularization constant. The code for implementing CPA is available \footnote[1]{https://github.com/gotazu/CPA}. 
\subsection{M-FOCUSS}
We used the regularized M-FOCUSS as described in \cite{Cotter2005SparseVectors}, and we used the code in  \footnote[2]{https://sites.google.com/site/researchbyzhang/software}. We used a regularization constant value $\lambda=1e-3$, $p_{norm}=0.8$, the threshold for stopping iteration $\epsilon=1e-8$, the threshold for pruning small gamma, prune $\gamma=1e-4$, and the maximum number of iterations was set to 500. We used the same set of parameters for all simulations, except for Figure \ref{F6}, where we systematically changed the regularization constant value $\lambda$ between $1e-7$ and $1e-1$.
\subsection{M-BMP}
We used the M-BMP as described in \cite{Cotter2005SparseVectors}. We selected 200 as the maximal number of iterations. The same set of parameters were used for all the simulations.

\subsection{Algorithms performance assessment}
In order to evaluate the performance of the algorithms we used the F-measurement, a more appropriate measurement for sparse representations than ROC analysis. It is defined as 
$F=2 \frac{precision∙recall}{precision+recall}$.
Precision is the fraction of detected atoms that were actually present in the signal. Recall is the fraction of atoms present in the signal that were detected. A value of $F=1$ indicates perfect detection, i.e., that all the atoms present were detected, and only those atoms were detected. For each of the three algorithms tested, CPA, M-FOCUSS, and M-BMP, we calculated a detection threshold for their output that maximized the F-measurement. For each condition, we repeated the simulation 10 times and reported the average of the optimal F-measurement.

\bibliographystyle{alpha}
\bibliography{Mendeley.bib}

\begin{thebibliography}{CRKEKD05}

\bibitem[BD08]{Baraniuk2008AMatrices}
R~Baraniuk and M~Davenport.
\newblock {A simple proof of the restricted isometry property for random
  matrices}.
\newblock {\em Constructive Approximation}, 2008.

\bibitem[CJ11]{Cai2011LIMITINGMATRICESc}
T~Tony Cai and Tiefeng Jiang.
\newblock {Limiting laws of coherence of random matrices with applications to
  testing covariance structure and construction of compressed sensing
  matrices}.
\newblock {\em The Annals of Statistics}, 39(3):1496--1525, 2011.

\bibitem[CRKEKD05]{Cotter2005SparseVectors}
S.F. Cotter, B.D. Rao, Kjersti Kjersti~Engan, and K.~Kreutz-Delgado.
\newblock {Sparse solutions to linear inverse problems with multiple
  measurement vectors}.
\newblock {\em IEEE Transactions on Signal Processing}, 53(7):2477--2488, 7
  2005.

\bibitem[CT05]{Candes2005DecodingProgramming}
E.J. Candes and T.~Tao.
\newblock {Decoding by Linear Programming}.
\newblock {\em IEEE Transactions on Information Theory}, 51(12):4203--4215, 12
  2005.

\bibitem[Don06]{Donoho2006ForSolution}
David~L. Donoho.
\newblock {For most large underdetermined systems of linear equations the
  minimal L1-norm solution is also the sparsest solution}.
\newblock {\em Communications on Pure and Applied Mathematics}, 59(6):797--829,
  6 2006.

\bibitem[MZ93]{Mallat1993MatchingDictionaries}
S.G. Mallat and {Zhifeng Zhang}.
\newblock {Matching pursuits with time-frequency dictionaries}.
\newblock {\em IEEE Transactions on Signal Processing}, 41(12):3397--3415,
  1993.

\bibitem[OL11]{Otazu2011AScenes.}
Gonzalo~H Otazu and Christian Leibold.
\newblock {A corticothalamic circuit model for sound identification in complex
  scenes.}
\newblock {\em PloS one}, 6(9):e24270, 1 2011.

\end{thebibliography}

\end{document}